\newtheorem{Them}{Theorem}
\newtheorem{Cor}{Corollary}
\newtheorem{Prop}{Proposition}
\newtheorem{Remark}{Remark}
\newcommand{\fq}{\mathbb{F}_q}
\newcommand{\rmv}[1]{}
\begin{document}
\title[On coefficients of powers of polynomials and their compositions]{On coefficients of powers of polynomials and their compositions over finite fields}
\author[Mullen]{Gary L. Mullen}
\address{Department of Mathematics,
 The Pennsylvania State University,
 University Park, PA 16802, USA}
\email{mullen@math.psu.edu}

\author[Muratovi\'{c}-Ribi\'{c}]{Amela Muratovi\'{c}-Ribi\'{c}}
\address{University of Sarajevo, Department of Mathematics, Zmaja od Bosne 33-35, 71000 Sarajevo, Bosnia and Herzegovina}
\email{amela@pmf.unsa.ba}

\author[Wang]{Qiang Wang}
\thanks{Research of Qiang Wang is partially supported by NSERC of Canada.}
\address{School of Mathematics and Statistics,
Carleton University, 1125 Colonel By Drive,  Ottawa, Ontario, $K1S$
$5B6$, CANADA}

\email{wang@math.carleton.ca}

\keywords{\noindent finite field,  value sets, permutation polynomial, inverse of
permutation polynomial, composition, pseudorandom sequence}

\subjclass[2000]{11T06, 11T35, 15B33}

\begin{abstract}
For any given polynomial $f$ over the finite field $\fq$ with degree at most $q-1$, we associate it with a 
$q\times q$ matrix $A(f)=(a_{ik})$ consisting of coefficients of its powers
$(f(x))^k=\sum_{i=0}^{q-1}a_{ik} x^i$ modulo $x^q -x$ for $k=0,1,\ldots,q-1$. This matrix has some interesting properties such as $A(g\circ f)=A(f)A(g)$ where $(g\circ f)(x) = g(f(x))$ is the composition of the polynomial $g$ with the polynomial $f$. In particular, $A(f^{(k)})=(A(f))^k$ for any $k$-th composition $f^{(k)}$ of $f$ modulo $x^q-x$ with $k \geq 0$. As a consequence, we prove that  the rank of $A(f)$ gives the cardinality of the value set of $f$.  Moreover,  if $f$ is a permutation polynomial then the matrix associated with its inverse $A(f^{(-1)})=A(f)^{-1}=PA(f)P$ where $P$ is an
antidiagonal permutation matrix.  As an application, we study the period of  a nonlinear congruential pseduorandom sequence $\bar{a} = \{a_0, a_1, a_2, ... \}$ generated by $a_n = f^{(n)}(a_0)$ with initial value $a_0$, in terms of the order of the associated matrix.  Finally we show that $A(f)$ is diagonalizable in some extension field of $\fq$ when $f$ is a permutation polynomial over $\fq$.
\end{abstract}
\maketitle
\section{introduction}
Let $\fq $ be the finite field of order $q=p^n$ where $p$ is a prime number 
and $n$ is a positive integer. Let $f(x) = \sum_{i=0}^{q-1}a_ix^i$ be a
 polynomial over $\fq $ with degree at most $q-1$. To compute  its composition with another polynomial $g(x)=\sum_{i=0}^{q-1}b_ix^i$,  we can either use interpolation to obtain its expression directly, or  calculate all the powers $f(x)^i\pmod {x^q-x}$ in the expression $(g\circ f)(x)=\sum_{i=0}^{q-1}b_i(f(x))^i$. 

\par Denote by
$$(f(x))^k=\sum_{i=0}^{q-1}a_{ik}x^i ~\mod{(x^{q} -x)}$$
the $k$-th power of the polynomial $f(x)$ for $k=1,2,\ldots ,q-1$. Denote by $f_0$ the zero polynomial in $\fq[x]$. If $f\neq f_0$ we will define $(f(x))^0=1$ and $f_0(x)^0=0$.
\par For any polynomial $f(x)=\sum_{i=0}^{q-1}a_ix^i$ we associate a coefficient vector $v_f$ with it, namely,  
$$v_f=(a_0,a_1,\ldots ,a_{q-1})^T.$$
We define a $q\times
q$ matrix associated with $f(x)\neq f_0(x)$ by
$$A(f)=\begin{bmatrix}  1& a_{01}   &a_{02} &\ldots &a_{0,q-2}&a_{0,q-1}\\
0& a_{11} &a_{12} &\ldots &a_{1,q-2}&a_{1,q-1}\\
\vdots &\vdots &\ddots &\vdots &\vdots\\
0& a_{q-2, 1} &a_{q-2,2} &\ldots &a_{q-2,q-2}&a_{q-2,q-1}\\
0& a_{q-1,1} &a_{q-1,2} &\ldots &a_{q-1,q-2}&a_{q-1,q-1}\end{bmatrix},$$
where the $k$-th column consists of the coefficients of the $(k-1)$-th power of $f(x)$. 
In particular, we define $A(f_0)$ to be the zero $q\times q$ matrix. We note that we can build the matrix $A(f)$ by directly computing each of the $k$-th powers of $f(x)$ modulo $x^q-x$. For example, finding each column of $A(f)$ takes $q^{1+o(1)}$ 
bit operations  using the result of Kedlaya and Umans \cite{KU}.  On the other hand, using  Lagrange's interpolation $f(x)^k = \sum_{\alpha \in \fq} f(\alpha)^k \left(1-(x-\alpha)^{q-1}\right)$,   one can obtain the explicit expression for all the entries of $A(f)$. Namely, for all $1\leq i, j \leq q-1$, we have $a_{ij} = - \sum_{\alpha \in \fq} f(\alpha)^j {q-1 \choose i } (-\alpha)^{q-1-i}$ and $a_{0j} = f(0)^j = \sum_{\alpha\in \fq} f(\alpha)^j (1- (-\alpha)^{q-1})$.

The {\it Bell matrix} of an analytic function $f$ is an infinite matrix defined as
\[
B[f]_{jk} = \frac{1}{j!}\left[\frac{d^j}{dx^j} (f(x))^k \right]_{x=0},
\]
where  $(f(x))^k = \sum_{j=0}^{\infty} B[f]_{jk} x^j$. It is sometimes called a {\it 
Jabotinsky matrix}. The transpose of a Bell matrix is called a {\it Carleman matrix}, which is often used in iteration theory to find the continuous iteration of a function \cite{GK:02}.

In this paper we show that our matrix $A(f)$ of a polynomial $f$ over $\fq$ is indeed a finite field analogue of the Bell matrix. Some fundamental properties in terms of the composition of polynomials are proved similarly. Moreover, we derive a few results specifically related to finite field theory.   
In Section~\ref{section:general} we show that the matrix associated with the composition of two polynomials over a finite field is the product of two associated matrices. That is, $A(g\circ f) = A(f) A(g)$.   As a corollary, we prove that the value set size of any polynomial $f$ over $\fq$ is the rank of its associated matrix $A(f)$, which is equivalent to an earlier result of Chou and Mullen \cite{ChouMullen}, which deals with the transpose of the $(1,1)$-minor of $A(f)$.  In Section~\ref{section:PP}, we concentrate on permutation polynomials over $\fq$. In particular, we prove that the associated matrix for the compositional inverse $f^{(-1)}$ satisfies $A(f^{(-1)}) = PA(f)P$, where $P$ is an antidiagonal permutation matrix defined by $P_{i, q-i} =1$ for $i=1, \ldots, q$ and zero otherwise. Moreover, we show $A(f)$ is  diagonalizable in some extension field of $\fq$. 
Throughout this paper, we note that $f^k(x)$ or $(f(x))^k$ denotes the $k$-th power of $f(x)$ modulo $x^q-x$, while $f^{(k)}(x)$ denotes the $k$-th composition of $f(x)$ modulo $x^q-x$.  

\section{The matrix of a composition of polynomials} \label{section:general}

First we derive the following obvious result.
\begin{Prop} \label{compo} Let $f(x)=\sum_{i=0}^{q-1}a_ix^i\in \fq [x]$ and $g(x)=\sum_{i=0}^{q-1}b_ix^i\in \fq [x]$. Then
$$v_{g\circ f}=A(f)v_g.$$
\end{Prop}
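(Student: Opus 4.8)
The plan is to expand the composition $g\circ f$ as an explicit $\fq$-linear combination of powers of $f$, reduce each power modulo $x^q-x$, and then read off the coefficient of each monomial $x^i$. I expect this to reproduce, entry by entry, the matrix-vector product $A(f)v_g$, so that the identity falls out of a single indexed computation.

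First I would write, using $\deg g\le q-1$,
\[
(g\circ f)(x)=g(f(x))=\sum_{k=0}^{q-1}b_k\,(f(x))^k .
\]
The crucial point is that reduction modulo $x^q-x$ is an $\fq$-linear operation on $\fq[x]$, hence it commutes with this finite sum: reducing the whole expression coincides with reducing each term $(f(x))^k$ separately and then taking the linear combination with the scalars $b_k$. By the very definition of the entries $a_{ik}$ of $A(f)$, we have $(f(x))^k\equiv\sum_{i=0}^{q-1}a_{ik}x^i \pmod{x^q-x}$, so no further reduction is needed once these are substituted.

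Substituting and interchanging the (finite) order of summation yields
\[
(g\circ f)(x)\equiv\sum_{i=0}^{q-1}\Bigl(\sum_{k=0}^{q-1}a_{ik}b_k\Bigr)x^i \pmod{x^q-x}.
\]
Thus the coefficient of $x^i$ in $g\circ f$, i.e.\ the $i$-th component of $v_{g\circ f}$, equals $\sum_{k=0}^{q-1}a_{ik}b_k$. This is exactly the $i$-th component of the product of the matrix $A(f)=(a_{ik})$ with the coefficient vector $v_g=(b_0,\ldots,b_{q-1})^T$, and since this holds for every $i$ we obtain $v_{g\circ f}=A(f)v_g$.

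There is no genuine obstacle beyond careful bookkeeping of the indices $i,k$ and the linearity of the reduction step; the one place deserving attention is the degenerate case $f=f_0$. Here the conventions $(f_0)^0=0$ and $A(f_0)=0$ are tailored precisely so that the identity persists: the power-sum $\sum_{k}b_k(f_0)^k$ collapses to the zero polynomial, matching $A(f_0)v_g=0$. I would record this consistency explicitly so that the statement holds for every pair $f,g$ and not merely in the generic case.
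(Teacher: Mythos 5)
Your proof is correct and follows essentially the same route as the paper's: expand $(g\circ f)(x)=\sum_{k=0}^{q-1}b_k(f(x))^k$, substitute the defining expansion $(f(x))^k\equiv\sum_{i=0}^{q-1}a_{ik}x^i \pmod{x^q-x}$, interchange the two finite sums, and match the coefficient of $x^i$ with the $(i+1)$-th entry of $A(f)v_g$. The only additions are your explicit appeal to the linearity of reduction modulo $x^q-x$ and your remark on the degenerate case $f=f_0$, both of which the paper passes over silently.
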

\begin{proof} The $(i+1)$-th coordinate of $A(f)v_g$ is given by
$(A(f)v_g)_{i+1}=\sum_{k=0}^{q-1}a_{ik} b_k$. On the other hand, we obtain $g\circ f(x)=\sum_{k=0}^{q-1}b_k(f(x))^k=\sum_{k=0}^{q-1}b_k\sum_{i=0}^{q-1}a_{ik}x^i=\sum_{i=0}^{q-1}(\sum_{k=0}^{q-1}b_ka_{ik})x^i$. Therefore we obtain $(v_{g\circ f})_{i+1}=\sum_{k=0}^{q-1}a_{ik} b_k=(A(f)v_g)_{i+1}$ for $i=0,1,\ldots q-1$.\end{proof}

\begin{Them} Let $g(x)=\sum_{i=0}^{q-1}c_ix^i \in \fq [x]$ and $f(x)=\sum_{j=0}^{q-1}a_j x^j\in \fq [x]$. Let $(g\circ f)(x) = g(f(x))$ be the composition of $g$ with $f$.  Then
$$A(g\circ f)=A(f)A(g).$$
\end{Them}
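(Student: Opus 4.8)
The plan is to prove the identity $A(g\circ f)=A(f)A(g)$ by showing that the two matrices agree column by column. Both are $q\times q$ matrices, so it suffices to verify that their $(k+1)$-th columns coincide for each $k=0,1,\ldots,q-1$. The key observation is that the $(k+1)$-th column of \emph{any} associated matrix $A(h)$ is, by definition, the coefficient vector $v_{h^k}$ of the $k$-th power $(h(x))^k$ reduced modulo $x^q-x$. So the goal reduces to checking, for each $k$, that the $(k+1)$-th column of $A(f)A(g)$ equals $v_{(g\circ f)^k}$.

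\par First I would identify the right-hand column. The $(k+1)$-th column of $A(f)A(g)$ is $A(f)$ times the $(k+1)$-th column of $A(g)$, and that column of $A(g)$ is exactly $v_{g^k}$, the coefficient vector of the $k$-th power of $g$. Thus the $(k+1)$-th column of $A(f)A(g)$ is $A(f)\,v_{g^k}$. Now I would invoke Proposition~\ref{compo}, which states $v_{h\circ f}=A(f)v_h$ for any polynomial $h$: applying it with $h=g^k$ gives $A(f)\,v_{g^k}=v_{g^k\circ f}$. So everything comes down to comparing $v_{g^k\circ f}$ with $v_{(g\circ f)^k}$.

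\par The crux is therefore the polynomial identity $(g^k)\circ f=(g\circ f)^k$ modulo $x^q-x$, that is, $\bigl(g(f(x))\bigr)^k\equiv g(f(x))^k\pmod{x^q-x}$ where on the left $g^k$ denotes the $k$-th power of $g$ taken modulo $x^q-x$ and then composed with $f$. I expect this compatibility between powering, composing, and reducing modulo $x^q-x$ to be the main obstacle, since one must be careful that taking the $k$-th power of $g$ first and reducing modulo $x^q-x$, then substituting $f$, yields the same reduced polynomial as substituting $f$ into $g$ and then raising to the $k$-th power. The cleanest way to handle this is to argue at the level of functions on $\fq$: two polynomials of degree at most $q-1$ are equal modulo $x^q-x$ if and only if they induce the same function on $\fq$, because $x^q-x$ vanishes on all of $\fq$ and the reduction modulo $x^q-x$ is the unique representative of degree $\le q-1$. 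Since $(g^k)(\alpha)=g(\alpha)^k$ for every $\alpha\in\fq$ (evaluation is a ring homomorphism, and reducing modulo $x^q-x$ does not change values on $\fq$), evaluating both sides at $f(\alpha)$ shows the two functions agree pointwise, hence the reduced polynomials are equal and $v_{g^k\circ f}=v_{(g\circ f)^k}$.

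\par Putting the pieces together yields the result: the $(k+1)$-th column of $A(f)A(g)$ equals $A(f)v_{g^k}=v_{g^k\circ f}=v_{(g\circ f)^k}$, which is precisely the $(k+1)$-th column of $A(g\circ f)$. Since this holds for every $k$, the matrices are equal. I would finish by remarking on the boundary case $k=0$ and the zero-polynomial conventions introduced earlier: when $g\circ f\neq f_0$ the zeroth column is $v_1=(1,0,\ldots,0)^T$ on both sides, and the degenerate cases involving $f_0$ should be checked separately against the stated convention $A(f_0)=0$, so that the identity holds uniformly.
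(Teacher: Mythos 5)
Your proof is correct and follows essentially the same route as the paper: both partition $A(g)$ into its columns $v_{g^k}$, apply Proposition~\ref{compo} to get $A(f)v_{g^k}=v_{g^k\circ f}$, and then identify $g^k\circ f$ with $(g\circ f)^k$ column by column. The one point of difference is how that last identity is justified: the paper writes $g^k=\sigma_k\circ g$ with $\sigma_k(x)=x^k$ and invokes associativity of composition, whereas you argue by evaluation, using the fact that two polynomials of degree at most $q-1$ agree modulo $x^q-x$ exactly when they agree as functions on $\fq$. Both are sound, and yours has the small advantage of making explicit why the reductions modulo $x^q-x$ interleaved between powering and substitution are harmless, a point the paper's associativity chain passes over silently.

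One caution about your closing remark: if you actually carry out the degenerate-case check you defer, you will find the identity does \emph{not} hold uniformly under the paper's conventions. Suppose $f,g\neq f_0$ but $g\circ f$ reduces to $f_0$ modulo $x^q-x$, which happens exactly when $g$ vanishes on the value set of $f$; for instance $f(x)=x^2$ and $g(x)=x^2-x$ over $\mathbb{F}_3$ give $g(f(x))=x^4-x^2\equiv 0 \pmod{x^3-x}$. Then $A(g\circ f)=A(f_0)$ is the zero matrix by convention, while $A(f)A(g)$ has first column $A(f)v_{g^0}=A(f)e_1=e_1\neq 0$, because the conventions force $g^0\circ f=1$ but $(g\circ f)^0=0$. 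So the $k=0$ column genuinely breaks in this case, and the theorem needs either the hypothesis $g\circ f\not\equiv f_0$ or the alternative convention $f_0^0=1$ (with $A(f_0)$ having first column $e_1$). This is a defect you inherited rather than introduced: the paper's own proof has the identical blind spot, since its equality $\sigma_0\circ(g\circ f)=(g\circ f)^0$ also fails under the stated convention when $g\circ f=f_0$.
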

\begin{proof}
By Proposition~\ref{compo}, we see that $A(f)v_{g^k}=v_{g^k\circ f}$ for any $k$-th power of the polynomial $g$. Let $\sigma_k(x)=x^k$.  Because the composition of polynomials is an associative operation, we have $g^k\circ f=(\sigma_k \circ g)\circ f=\sigma_k \circ (g\circ f)=(g\circ f)^k$. Therefore $A(f)v_{g^k}=v_{(g\circ f)^k}$ for all $k=0,1,2\ldots ,q-1$. Partitioning the matrix $A(g)$ with columns $v_{g^0},v_{g},v_{g^2},\ldots ,v_{g^{q-1}}$, we derive
$$A(f)A(g)=\Big(A(f)v_{g^0},A(f)v_{g},A(f)v_{g^2},\ldots ,A(f)v_{g^{q-1}}\Big)$$
$$=\Big(v_{(g\circ f)^0},v_{(g\circ f)},v_{(g\circ f)^2},\ldots ,v_{(g\circ f)^{q-1}}\Big)=A(g\circ f).$$
\end{proof}

We recall that $f^k(x)$ denotes the $k$-th power of $f(x)$, while $f^{(k)}(x)$ denotes the $k$-th composition of $f(x)$.  

\begin{Cor} For any given polynomial $f\in \fq[x]$ we have that $A(f^{(k)})=(A(f))^k$, for any $k=1,2,\ldots $.
\end{Cor}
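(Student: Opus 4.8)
The plan is to deduce this Corollary directly from the preceding Theorem by a straightforward induction on $k$. The Theorem supplies the single essential identity $A(g\circ f) = A(f)A(g)$, and everything else is bookkeeping; the only point that genuinely demands attention is the \emph{order reversal} built into that identity, since the composition $g\circ f$ corresponds to the product $A(f)A(g)$ and not to $A(g)A(f)$.

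For the base case $k=1$ there is nothing to do: $f^{(1)}=f$, so $A(f^{(1)})=A(f)=(A(f))^1$. For the inductive step I would assume $A(f^{(k)})=(A(f))^k$ and write the $(k+1)$-th composition as $f^{(k+1)}=f^{(k)}\circ f$. This regrouping is legitimate because composition of polynomials is associative, so all the ways of bracketing the $k+1$ copies of $f$ coincide, and each remains consistent after reduction modulo $x^q-x$. Applying the Theorem with $g$ replaced by $f^{(k)}$ then gives
$$A(f^{(k+1)})=A(f^{(k)}\circ f)=A(f)\,A(f^{(k)})=A(f)\,(A(f))^k=(A(f))^{k+1},$$
where the third equality is precisely the inductive hypothesis. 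This closes the induction and establishes the claim for all $k=1,2,\ldots$.

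The closest thing to an obstacle is simply keeping the two factors in the correct order when invoking $A(g\circ f)=A(f)A(g)$. Note that decomposing instead as $f^{(k+1)}=f\circ f^{(k)}$ would produce the product $(A(f))^k A(f)$, which equals $(A(f))^{k+1}$ just as well, so either bracketing leads to the same conclusion. In particular, no fresh computation involving the coefficients $a_{ik}$ is needed beyond what the Theorem already provides; the entire argument is an application of that identity together with associativity of composition.
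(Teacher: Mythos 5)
Your proof is correct and is exactly the argument the paper intends: the Corollary is stated without proof as an immediate consequence of the Theorem $A(g\circ f)=A(f)A(g)$, and your induction (decomposing $f^{(k+1)}=f^{(k)}\circ f$ and keeping the factors in the right order) simply makes that implicit reasoning explicit. Nothing further is needed.
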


This provides an algebraic way to study the composition of polynomials in terms of multiplication of matrices. Although the matrices associated with polynomials are large and costly to build, this still gives us some interesting theoretical consequences. We note that the transpose of the $(1, 1)$-minor of $A(f)$ was earlier studied by  Chou and Mullen \cite{ChouMullen}. They gave a result on the size of the value set of $f$ in terms of the rank of the $(1, 1)$-minor of $A(f)$;  see also page 234 in \cite{Handbook}. However, our proof is different. 

\begin{Cor} \label{ChouMullen}
Let $f$ be a polynomial over a finite field $\fq$ and  $|V_f|$ be the size of the value set $V_f=\{f(a) \mid a \in \fq\}$ of $f$. Then $|V_f| = rank (A(f))$. 
\end{Cor}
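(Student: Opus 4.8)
The plan is to pass from the coefficient representation encoded by $A(f)$ to the evaluation representation, in which the value set appears transparently. Let $V$ denote the $q\times q$ Vandermonde matrix with rows indexed by $\alpha\in\fq$ and entry $\alpha^i$ in column $i$ for $0\le i\le q-1$. Since the elements of $\fq$ are distinct, $V$ is invertible, and for any polynomial $h$ of degree at most $q-1$ the product $Vv_h$ is precisely the evaluation vector $(h(\alpha))_{\alpha\in\fq}$. Because multiplication by an invertible matrix preserves rank, $\mathrm{rank}(A(f))=\mathrm{rank}(VA(f))$.

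Next I would identify the matrix $E:=VA(f)$. Its $k$-th column is $Vv_{f^k}$, the evaluation vector of $f(x)^k \bmod (x^q-x)$. Since $\alpha^q=\alpha$ for every $\alpha\in\fq$, reducing modulo $x^q-x$ does not change values, so this column equals $(f(\alpha)^k)_{\alpha\in\fq}$. Hence the row of $E$ indexed by $\alpha$ is $(1,f(\alpha),f(\alpha)^2,\ldots,f(\alpha)^{q-1})$ (with the convention $0^0=1$, matching $f^0=1$). The entire row is determined by the single value $f(\alpha)$, so two rows coincide exactly when the corresponding values agree, and the number of distinct rows is exactly $|V_f|$.

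Finally I would compute $\mathrm{rank}(E)$ by a row argument. Writing $V_f=\{\beta_1,\ldots,\beta_m\}$ with $m=|V_f|$, the $m$ distinct rows of $E$ are $(\beta_j^0,\ldots,\beta_j^{q-1})$ for $j=1,\ldots,m$. Their first $m$ coordinates form an $m\times m$ Vandermonde matrix in the distinct nodes $\beta_1,\ldots,\beta_m$, which is nonsingular; hence these $m$ rows are linearly independent, while the remaining rows are duplicates and contribute nothing new. Therefore $\mathrm{rank}(E)=m=|V_f|$, and combining with the first step gives $\mathrm{rank}(A(f))=|V_f|$.

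The computation is routine once the translation is in place; the only point requiring care is the main conceptual step, namely recognizing that left multiplication by the Vandermonde matrix converts $A(f)$ into the evaluation matrix $E$, and that reduction modulo $x^q-x$ is invisible to evaluation. The linear independence of the distinct rows is then a standard Vandermonde determinant fact and presents no real obstacle.
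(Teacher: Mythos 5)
Your proposal is correct, and it takes a genuinely different route from the paper's proof. The paper works with the columns of $A(f)$ through its composition machinery: it takes the nonzero polynomial $g$ of least degree $m$ vanishing on $V_f$ (so $m=\deg g=|V_f|$), observes that $g\circ f=0$ gives $A(f)v_g=0$ by Proposition~\ref{compo}, i.e.\ a linear dependence among $1,f,\ldots,f^m$, while minimality of $g$ forces $1,f,\ldots,f^{m-1}$ to be linearly independent; the rank is then $m$ (the permutation case is treated separately, and the reduction of the higher powers $f^k$ with $k>m$ into the span of $1,f,\ldots,f^{m-1}$ is left implicit). You instead change coordinates from coefficients to evaluations: left multiplication by the invertible $q\times q$ Vandermonde matrix $V$ turns $A(f)$ into the evaluation matrix $E=VA(f)$ whose row at $\alpha$ is $\bigl(1,f(\alpha),\ldots,f(\alpha)^{q-1}\bigr)$, and $\mathrm{rank}(E)$ is visibly the number of distinct rows, namely $|V_f|$, since the distinct rows contain a nonsingular $m\times m$ Vandermonde block. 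Your argument is more elementary and uniform: it needs no case split between permutation and non-permutation $f$, no appeal to the annihilator polynomial of $V_f$, and no tacit reduction of higher powers; the factorization $VA(f)=E$ is also a useful structural fact in its own right, as it makes the rank (indeed the whole row space) readable directly from the values of $f$. What the paper's route buys in exchange is coherence with the formalism it is developing (Proposition~\ref{compo} and $A(g\circ f)=A(f)A(g)$) and an explicit identification of $\mathrm{rank}(A(f))$ with the degree of the minimal polynomial vanishing on $V_f$, which feeds into its later discussion of minimum value set polynomials. One caveat common to both proofs: under the paper's conventions $A(f_0)=0$ and $f_0^0=0$, the statement fails for the zero polynomial, so both arguments tacitly assume $f\neq f_0$ (your identification of the first column of $E$ as the all-ones vector uses $f^0=1$).
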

\begin{proof}
If $f(x)\in \fq [x]$ is not a permutation polynomial then we define $D=V_f$ and let $g\in \mathbb{F}_q [x]$ be a nonzero polynomial of least degree $m$ such that $g:D\rightarrow \{0\}$. 
Let $g(x)=b_mx^m+b_{m-1}x^{m-1}+\cdots +b_1x+b_0$. Then we have $g\circ f(x)= 0$, and thus $A(f)v_g=0$ by Proposition~\ref{compo}.   This means that the first $m+1$ columns of $A(f)$ are linearly dependent and thus the coefficients of $g$ determine a linear dependence among the  polynomials $1,f(x),f^2(x),\ldots ,f^{m}(x)$ in the sense that $\sum_{i=0}^{m}b_i(f(x))^i=0$. 
Moreover, $(f(x))^0,f(x),\ldots ,(f(x))^{m-1}$ are linearly independent because $g(x)$ is the lowest degree polynomial such that $g\circ f =0$. Therefore,  $rank(A(f))=deg(g(x))=|V_f|.$ 

If $f\in \fq [x]$ is permutation polynomial, then all the powers of $f$ and corresponding columns of $A(f)$ are linearly independent.
\end{proof}

Corollary~\ref{ChouMullen} states that the size of the value set of $f$ is given by the rank of the matrix $A(f)$. One would also like to know which elements $c \in \fq$ show up in the value set $V_f$ of $f$, and if $c$ shows up in the value set, how many times does it appear?

Again, we consider the polynomial  $f(x) = a_0 +a_1 x + \cdots + a_{q-1} x^{q-1}$ over $\fq$. First we want to determine the number of nonzero solutions to $f(x)=c$.  Let us consider the polynomial $h(x) =
(a_0 +a_{q-1} -c) +a_1 x + \cdots + a_{q-2}x^{q-2}$. Then, by the K\"{o}nig-Rados Theorem (Theorem 6.1 in \cite{LN:97}), the number of nonzero solutions to $f(x) = c$ is
$q-1-r$, where $r$ the rank of the $(q-1)\times (q-1)$ left circulant matrix 
$$C(h):= \begin{bmatrix}  
a_0 + a_{q-1}-c & a_{1}  &\ldots &a_{q-2}\\
a_1& a_2 &\ldots &a_0 + a_{q-1}-c\\
a_2 & a_3 & \ldots & a_1 \\
\vdots &\vdots &\vdots &\vdots\\
a_{q-3}& a_{q-2} &\ldots &a_{q-4}\\
a_{q-2} & a_{0}+ a_{q-1}-c  &\ldots & a_{q-3}\end{bmatrix}.$$

Therefore, if $c \neq f(0)$, then $c$ appears in the value set $V_f$ of $f$ if and only if the rank of  the matrix $C(h)$ is less than $q-1$. And the number of times that $c$ appears in the value set $V_f$ of $f$ is $r$ if and only if  the rank of the matrix $C(h)$ is $q-1-r$. If $c = f(0)$ then $c$ appears in the value set $q-r$ times.

Let $k$ be the largest positive integer such that $\{1, f, \ldots, f^{k}\}$ is linearly independent over $\fq$. Then obviously the rank $rank(A(f)) \geq k+1$. For example, let $f\in \fq[x]$ be a polynomial of degree $d$, then it is obvious that $1, f, \ldots, f^{\lfloor (q-1)/d \rfloor}$ are linearly independent and thus the value set $V_f$ has size $|V_f| \geq \lfloor (q-1)/d \rfloor +1$. We note that polynomials satisfying $|V_f| = \lfloor (q-1)/d \rfloor +1$ are called {\it minimum value set polynomials}. The classification of minimum value set polynomials is the subject of several papers; see \cite{BC,Carlitzetal:61,Gomez:88,GomezMadden:88,Mills:64}. Using the discussion after Corollary~\ref{ChouMullen}, we have the following.

\begin{Cor}
Let $f$ be a polynomial of degree $d$ over the finite field $\fq$. Then $f$ is a minimum value set polynomial if and only if $rank(A(f)) = \lfloor (q-1)/d \rfloor +1$. That is equivalent to, $\{1, f, \ldots, f^{\lfloor (q-1)/d \rfloor }\}$ is a basis which spans the space of all nonnegative powers of $f$. 
\end{Cor}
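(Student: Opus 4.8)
The plan is to reduce everything to Corollary~\ref{ChouMullen} together with the elementary fact that the powers $1, f, \ldots, f^{\lfloor (q-1)/d \rfloor}$ are automatically linearly independent. First I would recall that, by definition, $f$ is a minimum value set polynomial exactly when $|V_f| = \lfloor (q-1)/d \rfloor + 1$. Since Corollary~\ref{ChouMullen} identifies $|V_f|$ with $rank(A(f))$, the first biconditional is immediate: $f$ is a minimum value set polynomial if and only if $rank(A(f)) = \lfloor (q-1)/d \rfloor + 1$. No computation is needed here beyond quoting the definition and the earlier corollary.

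For the second equivalence I would first make precise what \emph{the space of all nonnegative powers of $f$} means. The columns of $A(f)$ are the coefficient vectors $v_{f^0}, v_f, \ldots, v_{f^{q-1}}$, and since $a_i \in \fq$ gives $f(x)^q \equiv f(x) \pmod{x^q - x}$, every nonnegative power of $f$ reduces modulo $x^q - x$ to one of $f^0, f^1, \ldots, f^{q-1}$. Hence the $\fq$-span of all nonnegative powers of $f$ coincides with the column space of $A(f)$, whose dimension is precisely $rank(A(f))$. I would then observe that the powers $f^0, f^1, \ldots, f^{\lfloor (q-1)/d \rfloor}$ have degrees $0, d, 2d, \ldots, \lfloor (q-1)/d \rfloor \, d$, which are distinct and all at most $q-1$; in particular they are unaffected by reduction modulo $x^q - x$, so the set $\{1, f, \ldots, f^{\lfloor (q-1)/d \rfloor}\}$ is linearly independent and contained in the column space, giving $rank(A(f)) \geq \lfloor (q-1)/d \rfloor + 1$ (as already noted in the discussion preceding this corollary).

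Finally I would combine these two ingredients. A linearly independent subset of a vector space is a basis if and only if its cardinality equals the dimension of the space. The set $\{1, f, \ldots, f^{\lfloor (q-1)/d \rfloor}\}$ is always linearly independent with $\lfloor (q-1)/d \rfloor + 1$ elements, so it is a basis of the column space of $A(f)$ if and only if $rank(A(f)) = \lfloor (q-1)/d \rfloor + 1$, which by the first part happens if and only if $f$ is a minimum value set polynomial. I do not expect any genuine obstacle in this argument; the only point requiring care is the bookkeeping that identifies the column space of $A(f)$ with the span of all nonnegative powers of $f$, and the remark that ``being a basis'' collapses to the single numerical condition on the rank once linear independence is known for free.
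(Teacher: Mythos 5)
Your proposal is correct and follows essentially the same route as the paper, which proves this corollary simply by invoking Corollary~\ref{ChouMullen} together with the observation (made in the discussion preceding the statement) that $1, f, \ldots, f^{\lfloor (q-1)/d \rfloor}$ are linearly independent for degree reasons. The only difference is that you spell out the bookkeeping (identifying the span of all nonnegative powers of $f$ modulo $x^q-x$ with the column space of $A(f)$, and the basis-equals-dimension argument) that the paper leaves implicit.
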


Let us consider the $(1,1)$-minor $M(f)$ of $A(f)$.  If
the $i$-th row of $M(f)$ consists entirely of 0's or entirely of 1's, set $l_i$ = 0.
Otherwise for a nonzero $i$-th row of $M(f)$, arrange the entries in a circle and
define $l_i$ to be the maximum number of consecutive zeros appearing in this
circular arrangement. Let $L_f$ be the maximum of the values of $l_i$, where the
maximum is taken over all of the $q-1$ rows  of the matrix $M(f)$. Using the linearly independence of these columns, we can derive a lower bound of the size of the value set $V_f$. 

\begin{Cor} (Theorem 1, \cite{DasMullen})
Let $f$ be a polynomial over $\fq$ and $L_f$ be defined as above. Then 
$|V_f| \geq L_f +2$. 
\end{Cor}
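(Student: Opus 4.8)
The plan is to connect the combinatorial quantity $L_f$ to a linear independence argument among the powers $1, f, f^2, \ldots, f^{L_f+1}$, and then invoke Corollary~\ref{ChouMullen} (which gives $|V_f| = \mathrm{rank}(A(f))$) together with the observation recorded just before this statement, namely that if $\{1, f, \ldots, f^k\}$ is linearly independent then $\mathrm{rank}(A(f)) \geq k+1$. So the whole game reduces to producing a witness: I must exhibit a set of $L_f + 2$ consecutive powers of $f$ that are linearly independent, which immediately forces $\mathrm{rank}(A(f)) \geq L_f + 2$ and hence $|V_f| \geq L_f + 2$.

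First I would unpack the definition of $L_f$. Suppose the maximum $L_f = l_i$ is achieved in row $i$ of the $(1,1)$-minor $M(f)$, and that in the circular arrangement of that row there is a run of $L_f$ consecutive zeros, say in columns corresponding to powers $f^{j}, f^{j+1}, \ldots, f^{j+L_f-1}$ (indices read cyclically, which is exactly why the ``arrange in a circle'' phrasing appears — it accounts for the degrees reducing modulo $x^q - x$). Recall that the $(k)$-th column of $A(f)$ holds the coefficient vector of $f^{k-1}(x) \bmod (x^q - x)$, so row $i$ of $M(f)$ records the coefficient of $x^i$ in each power $f, f^2, \ldots, f^{q-1}$. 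A maximal run of zeros in this row says that the powers on the two sides of the run have a \emph{nonzero} $x^i$-coefficient while the powers inside the run do not.

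Next I would build the independence certificate from the two boundary powers flanking the run together with the run itself. The key point is that $f^{j-1}$ (just before the run) and $f^{j+L_f}$ (just after) have nonzero coefficient of $x^i$, while the $L_f$ powers strictly between them have zero coefficient of $x^i$; this asymmetry in a single coordinate is what one leverages to rule out a linear dependence, or equivalently to exhibit a $(L_f+2) \times (L_f+2)$ nonsingular submatrix of $A(f)$ whose rows are chosen to isolate the contributions of these particular powers. From the resulting $\mathrm{rank}(A(f)) \geq L_f + 2$ and Corollary~\ref{ChouMullen} the bound $|V_f| \geq L_f + 2$ follows at once.

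The main obstacle I anticipate is the cyclic/boundary bookkeeping rather than any deep idea. Because the run of zeros is taken around a circle, the two flanking powers might wrap around the ends of the row (e.g. one boundary is $f^{q-1}$ and the other is $f^1$), and one must handle the $(1,1)$-minor convention — which excludes the constant power $f^0 = 1$ and the zeroth coordinate — without losing the full count of $L_f + 2$ independent powers. Pinning down precisely which $L_f + 2$ powers to use, and verifying that the associated submatrix of $A(f)$ really is nonsingular in all wraparound cases, is where the care is required; the linear-algebra core, once the right submatrix is identified, is routine.
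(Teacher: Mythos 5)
Your overall frame---reduce the statement to $\mathrm{rank}(A(f))\geq L_f+2$ and invoke Corollary~\ref{ChouMullen}---is the one the paper intends (the paper itself gives no proof, citing \cite{DasMullen} and gesturing at ``linear independence of these columns''), but your independence certificate, which is the entire mathematical content of your proposal, is false. You claim that the $L_f+2$ consecutive powers $f^{j-1},f^{j},\ldots,f^{j+L_f}$ flanking a maximal circular run of zeros in row $i$ are linearly independent. A single row cannot certify this: restricting a putative dependence $\sum_{r=-1}^{L_f}\lambda_r v_{f^{j+r}}=0$ to row $i$ yields only the one equation $\lambda_{-1}a_{i,j-1}+\lambda_{L_f}a_{i,j+L_f}=0$, which has nontrivial solutions, and dependencies supported on the interior columns are not constrained by row $i$ at all. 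Concretely, take $f(x)=x^2$ over $\mathbb{F}_5$: then $f^1\equiv x^2$, $f^2\equiv x^4$, $f^3\equiv x^2$, $f^4\equiv x^4 \pmod{x^5-x}$, so row $2$ of $M(f)$ is $(1,0,1,0)$ and $L_f=1$, the run consisting of the single zero in column $2$, flanked by columns $1$ and $3$. Your certificate is $\{f^1,f^2,f^3\}=\{x^2,x^4,x^2\}$, which is linearly \emph{dependent} since $f^3\equiv f^1$; the wrapped run at column $4$ fails the same way. The bound itself holds here, $|V_f|=|\{0,1,4\}|=3=L_f+2$, but it is witnessed by $\{f^0,f^1,f^2\}=\{1,x^2,x^4\}$, a set containing $f^0$ and not attached to the run, so no amount of bookkeeping will make your chosen submatrix nonsingular.

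The argument has to be global and contrapositive rather than local. For $k\geq 1$ one has $f^k\equiv\sum_{c\in V_f\setminus\{0\}}c^k h_c \pmod{x^q-x}$, where $h_c$ is the indicator polynomial of the fibre $f^{-1}(c)$; hence each row sequence $(a_{i,k})_{k\geq 1}$ is periodic of period $q-1$ (this periodicity, $f^{k+q-1}\equiv f^k$, is the real reason the circular arrangement is legitimate, not ``degrees reducing mod $x^q-x$'') and is a linear combination of geometric sequences $(c^k)$ with distinct nonzero ratios. A Vandermonde argument shows such a sequence is either identically zero or has at most $|V_f\setminus\{0\}|-1$ consecutive zeros circularly; equivalently, the vanishing polynomial $g(y)=\prod_{c\in V_f}(y-c)$ satisfies $g(f)\equiv 0$, and when $g(0)=0$ it induces a linear recurrence of order $|V_f|-1$ on the columns with $k\geq 1$, so a run of $|V_f|-1$ zeros would propagate around the circle and force the whole row to vanish. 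Thus $l_i\leq |V_f\setminus\{0\}|-1$ for every row, which gives $L_f\leq |V_f|-2$ precisely when $0\in V_f$ (the normalization $f(0)=0$ implicit in \cite{DasMullen}, harmless since $|V_{f-f(0)}|=|V_f|$). Be warned that some such hypothesis is genuinely needed and your proposal never uses one: for $f(x)=3x^4+3x^3+4x+1$ over $\mathbb{F}_5$ one has $V_f=\{1,2\}$ while row $1$ of $M(f)$ equals $(4,2,3,0)$, so $L_f=1$ and the inequality $|V_f|\geq L_f+2$ fails as literally stated---so a proof of the literal statement that never invokes a normalization cannot be correct.
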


In \cite{Mullen}, Mullen fully classified polynomials $f(x)$ over finite fields which commute with linear permutations, that is, $f(bx+a) = bf(x) +a$. We note that $A(bx+a)$ is an upper triangular matrix.  Comparing the second column of $A(bx+a) A(f) = A(f) A(bx+a)$, one can derive the following corollary. 

\begin{Cor} (Theorem 1,  \cite{Mullen})
The polynomial $f(x) = b_0 + b_1 x + \cdots + b_{q-1} x^{q-1}$ satisfies $f(bx+a) = bf(x) +a$ if and only if 
\begin{eqnarray}
b_0 (b-1) &=& -a + \sum_{t=1}^{q-1} b_t a^t, \\
b_s (1-b^{s-1}) &=& b^{s-1}\sum_{t=s+1}^{q-1} {t\choose s}a^{t-s}b_t,  ~~~(1\leq s \leq q-1) \nonumber
\end{eqnarray}
\end{Cor}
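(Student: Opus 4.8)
The plan is to recognize the functional equation $f(bx+a)=bf(x)+a$ as a commutation relation and then read off the two claimed families of identities by comparing a single column of the relevant matrix products. Write $\ell(x)=bx+a$; since $b\neq 0$ (so that $\ell$ is a permutation) and $\deg f\le q-1$, both $f(bx+a)=(f\circ\ell)(x)$ and $bf(x)+a=(\ell\circ f)(x)$ already have degree at most $q-1$, so no reduction modulo $x^q-x$ takes place and the stated condition is literally $f\circ\ell=\ell\circ f$. By the composition identity $A(g\circ f)=A(f)A(g)$ this is the matrix equation $A(\ell)A(f)=A(f)A(\ell)$, which is the display the paper points to.

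First I would record the explicit shape of $A(\ell)$. Expanding $(bx+a)^{j}=\sum_{i=0}^{j}\binom{j}{i}b^{i}a^{j-i}x^{i}$ shows that $A(\ell)$ is upper triangular with $(i,j)$-entry $\binom{j}{i}b^{i}a^{j-i}$ (indices starting at $0$); in particular its second column is the coefficient vector $v_{\ell}=(a,b,0,\dots,0)^{T}$ of $\ell$.

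Rather than expand the full matrix product, I would compare only the second columns, which already captures the entire condition. By Proposition~\ref{compo}, the second column of $A(f)A(\ell)$ is $A(f)v_{\ell}=v_{\ell\circ f}=v_{bf+a}$, while the second column of $A(\ell)A(f)$ is $A(\ell)v_{f}=v_{f\circ\ell}=v_{f(bx+a)}$. Since a polynomial of degree at most $q-1$ is determined by its coefficient vector, equality of these two columns is \emph{equivalent} to $f(bx+a)=bf(x)+a$, so this comparison yields both directions of the corollary at once.

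Finally I would equate the two coefficient vectors entrywise. The $x^{0}$ coefficients give $a+b\,b_{0}=\sum_{t=0}^{q-1}a^{t}b_{t}$, which rearranges to the first identity $b_{0}(b-1)=-a+\sum_{t=1}^{q-1}b_{t}a^{t}$. For $s\ge 1$ the $x^{s}$ coefficients give $b\,b_{s}=b^{s}\sum_{t=s}^{q-1}\binom{t}{s}a^{t-s}b_{t}$; isolating the $t=s$ term and dividing by $b$ (legitimate precisely because $b\neq 0$) produces $b_{s}(1-b^{s-1})=b^{s-1}\sum_{t=s+1}^{q-1}\binom{t}{s}a^{t-s}b_{t}$, the second family. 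The only care required is the index bookkeeping in the binomial expansion, the separate treatment of the constant term $s=0$, and the use of $b\neq 0$ to clear the common factor $b$; beyond this routine verification there is no genuine obstacle.
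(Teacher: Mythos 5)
Your proposal is correct and follows essentially the same route the paper sketches: it identifies $f(bx+a)=bf(x)+a$ with the commutation relation $A(bx+a)A(f)=A(f)A(bx+a)$ via $A(g\circ f)=A(f)A(g)$, and extracts the two identities by comparing second columns, which by Proposition~\ref{compo} are exactly $v_{f(bx+a)}$ and $v_{bf(x)+a}$. Your explicit computation of the entries $\binom{j}{i}b^{i}a^{j-i}$ of the upper triangular matrix $A(bx+a)$, the separate treatment of $s=0$, and the use of $b\neq 0$ to cancel the common factor correctly fill in the details the paper leaves to the reader.
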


\rmv{
\begin{Remark}
Let $f(x)\in \fq[x]$. It is obvious that $\lambda =1$ is an eigenvalue of $A(f)$. $e_1$ is one of its eigenvectors. Let $v\in \fq ^n$ be an arbitrary vector with coordinates $v_i$. If $v\neq 0$ we can associate a polynomial $g$ to vector $v$ so that $v=v_g$. Then $e_1=f_0$. If $v_g$ is eigenvector of $A(f)$ corresponding to eigenvalue $\lambda =1 $ then $g\circ f(x)=g(x)$ for all $x\in \fq$. Set $\fq $ can be partitioned to the least subsets $S_i$ such that $f:S_i\rightarrow S_i$ and $\cup_{i=1}^m S_i=\fq $ ({\bf I don't understand this sentence}). Defining polynomial $g_i(x)=1$ if $x\in S_i$ and zero otherwise, for $i=1,2,\ldots ,m$, we can see that $g_i\circ f(x)=g_i$, vectors $v_{g_i}$ are linearly independent and together with $e_0$ generates the  eigenspace $E_1$ i.e. there is a vector subspace of polynomials such that $g\circ f=g$. For any $\mu \in \fq $, let $\delta_{\mu }(x)=\mu x\in \fq [x]$. Assume $g\circ f(x)=\mu g(x)$.  Then if permutation polynomial $g(x)$ induces eigenvector of $A(f)$ it means that $f(x)$ is also permutation polynomial conjugate with $\delta_{\mu }$.
\end{Remark}
}

\section{Permutation polynomials}\label{section:PP}
Permutation polynomials over the field $\mathbb{F}_q$ under the operation of functional composition form a   group isomorphic to the symmetric group $(S_q,\circ )$ with $q!$ elements. There is a representation of the permutation polynomials in terms of circulant matrices such that its centralizer is commutative \cite{Amela0},  but here we consider the representation of $f$ in terms of the invertible matrix $A(f)$. We note that the mapping $f\rightarrow A(f)$ is one-to-one. Hermite's criterion (Theorem 7.4 in \cite{LN:97}) states that $f(x)$ is
permutation polynomial if and only if the coefficient $a_{q-1, k}$ in the $k$-th power of $f(x)$ is $0$  for all $k=1,2,\ldots ,q-2$ and $f(x)$ has exactly one root in $\fq$, say $f(e) =0$.  This means that all entries of the last row of $A(f)$ are zero except $a_{q-1, q-1} =1$. Indeed,  $f(x)^{q-1} = 1$ if $x \neq e$ and $f(x)^{q-1} =0$ if $x=e$. Hence 
\[
f(x)^{q-1} = \sum_{a\neq e} (1-(x-a)^{q-1}) = x^{q-1} - \sum_{i=1}^{q-2}  \sum_{a \neq e} {q-1 \choose i} (-a)^{q-1-i}  x^i + \sum_{a \neq e} (1- (-a)^{q-1}). 
\]
Therefore $a_{q-1, q-1} = 1$. Moreover,  $a_{0, q-1} = 1$ if $e \neq 0$, and 
$a_{0, q-1} = 0$ if $e = 0$.

We now consider the compositional inverse $f^{(-1)}$ of a permutation polynomial $f$ with respect to composition. Since $A(g\circ f)=A(f)A(g)$ and the matrix associated with $f^{(0)}(x)=Id(x)=x$ is the identity matrix, it is easy to see that $A(f^{(-1)}) = A(f)^{-1}$. Moreover, we have


\begin{Them}\label{inverse} Let $f$ be a permutation polynomial of $\fq $. Let $P$ be the  antidiagonal permutation matrix, i.e. $P$ is defined by $P_{i,(q-i)}=1$ for $i=1,2,\ldots, q$ and zero otherwise. Then $A(f^{(-1)})=(A(f))^{-1} = PA(f)P$.
\end{Them}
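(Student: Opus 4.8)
The plan is to start from the relation $A(f^{(-1)})=A(f)^{-1}$, which is already established just before the statement, so that it remains only to prove the second equality $A(f)^{-1}=PA(f)P$. Since $P$ is a permutation matrix with $P^2=I$, I would read $PA(f)P$ entrywise: conjugation by the antidiagonal $P$ reverses both the row index and the column index, so that the $(i,k)$ entry of $PA(f)P$ is the entry of $A(f)$ at the reversed position, namely the coefficient of $x^{q-1-i}$ in $f(x)^{q-1-k}$ (with the evident modification at the index that $P$ fixes). Thus the whole theorem reduces to a single entrywise identity comparing the coefficient of $x^{i}$ in $(f^{(-1)})^{k}$ with the reversed coefficient of a power of $f$, and my goal becomes to verify that identity for every pair $(i,k)$.

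The main tool I would use is the explicit value formula for the entries recorded in the introduction, $a_{ij}=-\binom{q-1}{i}\sum_{\alpha\in\fq}f(\alpha)^{j}(-\alpha)^{q-1-i}$ for $1\le i,j\le q-1$, together with the corresponding formula applied to $f^{(-1)}$. The key structural input is that $f$ permutes $\fq$, so the substitution $\beta=f(\alpha)$ is a bijection of $\fq$ with inverse $\alpha=f^{(-1)}(\beta)$; applying it turns a value-sum indexed by the values of $f^{(-1)}$ into one indexed by the values of $f$, which is exactly what is needed to pass between $A(f^{(-1)})$ and $A(f)$. I would also record the two facts that pin down the boundary behaviour: $f(0)$ controls the constant terms (the $0$-th row), while Hermite's criterion, as recalled just above, forces the last row of $A(f)$ to vanish except for $a_{q-1,q-1}=1$, and moreover $f(x)^{q-1}=(x-e)^{q-1}$ where $e$ is the unique root of $f$. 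These let me dispose of the rows and columns that $P$ treats specially.

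The step I expect to be the real obstacle is reconciling the two value-sums after the change of variables. The difficulty is that the reversal sends the binomial factor $\binom{q-1}{i}$ and the exponent of $\alpha$ to their complementary values and interchanges the roles of the coefficient index and the power; consequently the two sums do not agree term by term, and one cannot simply cancel. To finish I would expand both sides, collect the powers $\alpha^{m}$, and invoke the power-sum evaluation $\sum_{\alpha\in\fq}\alpha^{m}=-1$ when $(q-1)\mid m$ with $m>0$ and $0$ otherwise, using the symmetry $\binom{q-1}{i}\equiv\binom{q-1}{q-1-i}\pmod p$ and the complementary-power relation $f^{k}f^{q-1-k}=f^{q-1}$ to force the surviving terms into the required shape. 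Carefully tracking this index reversal, so that the final matrix identity comes out in exactly the orientation claimed, is the delicate part; everything else is bookkeeping once the bijection $\beta=f(\alpha)$ is in place.
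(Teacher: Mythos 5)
Your setup---reducing to $A(f)^{-1}=PA(f)P$ via the already-established $A(f^{(-1)})=A(f)^{-1}$, expressing entries through the interpolation formula, using the bijection $\beta=f(\alpha)$, and letting Hermite's criterion handle the boundary row and column---is exactly the paper's route; the paper works with the coefficient formula $a_{ik}=-\sum_{s\in\fq}f(s)^{k}s^{q-1-i}$, which is your formula after the simplification $\binom{q-1}{i}(-\alpha)^{q-1-i}=\alpha^{q-1-i}$ (valid since $\binom{q-1}{i}\equiv(-1)^{i}\pmod{p}$). However, the obstacle you flag in your last paragraph is not a computational nuisance to be overcome with power sums; it is fatal to your stated target. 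The substitution $\beta=f(\alpha)$ gives, cleanly and term by term,
$$a_{ik}=-\sum_{\beta\in\fq}\beta^{k}\bigl(f^{(-1)}(\beta)\bigr)^{q-1-i}=b_{q-1-k,\,q-1-i},$$
that is, the row and column indices are \emph{exchanged} as well as reversed; this is precisely the relation the paper derives. Your target identity for the entries of $PA(f)P$ is $b_{ik}=a_{q-1-i,\,q-1-k}$---reversal without the exchange---and that identity is false in general, so no amount of collecting powers of $\alpha$, binomial symmetry, or the relation $f^{k}f^{q-1-k}=f^{q-1}$ can produce it. Concretely, for $q=3$, $f(x)=x+1$, $f^{(-1)}(x)=x+2$, one has
$$A(f)=\begin{pmatrix}1&1&1\\0&1&2\\0&0&1\end{pmatrix},\qquad A(f^{(-1)})=\begin{pmatrix}1&2&1\\0&1&1\\0&0&1\end{pmatrix},\qquad PA(f)P=\begin{pmatrix}1&0&0\\2&1&0\\1&1&1\end{pmatrix},$$
so $A(f^{(-1)})=(PA(f)P)^{T}\neq PA(f)P$.

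What your computation would actually prove, once carried out, is $A(f^{(-1)})=PA(f)^{T}P$, equivalently $(A(f))^{-1}=PA(f)^{T}P$. You should be aware that the paper's own proof has the same issue: it correctly derives $a_{ik}=b_{q-1-k,q-1-i}$ and then asserts $A(f^{(-1)})=PA(f)P$, silently dropping the transpose; since $A(f)$ is not symmetric in general, the theorem as literally stated (and the ensuing corollary $(PA(f))^{2}=I$) requires this correction. So your instinct that ``the two sums do not agree term by term'' in the required orientation was exactly right; the resolution is not a harder calculation but recognizing that the true identity carries a transpose.
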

\begin{proof}

Obviously, $A(f^{(-1)})=(A(f))^{-1}$.  Denote the $k$-th power of $f$ and the inverse polynomial $f^{(-1)}$ by 
$f^{k}(x)=\sum_{i=0}^{q-2}a_{ik}x^i$ and $(f^{(-1)}(x))^k=\sum_{i=0}^{q-2}b_{ik}x^i$ respectively, for $k=1,2,\ldots, q-1$. For any permutation polynomial $g(x)=\sum_{i=0}^{q-2}c_ix^i$,  it is well known (see for example \cite{Amela}) that its coefficients can be
calculated by $c_i=-\sum_{s\in \fq }s^{q-1-i}g(s),$ for $i=0, 1,\ldots ,q-2$. 

For $1\leq k \leq q-2$, by Hermite's criterion, the polynomial $f^k(x)$ must have degree at most $q-2$. 
Therefore we have for $0\leq i \leq q-2$ and $1\leq k \leq q-2$, 
$$a_{ik}=-\sum_{s\in \fq }(f(s))^ks^{q-1-i} = -\sum_{s\in \fq}s^k(f^{(-1)}(s))^{q-1-i})=b_{q-1-k, q-1-i},$$
i.e.
$$a_{ik}=b_{q-1-k, q-1-i},  ~for ~0\leq i \leq q-2~ and~ 1\leq k \leq q-2. $$
Moreover, $a_{q-1, k} =0$ for $ 1\leq k \leq q-2$ and $a_{q-1, q-1} =1$. In addition, $a_{q-1, k} = b_{q-1-k, 0}$ by the definition of $A(f^{(-1)})$.

On the other hand, for   any polynomial $g(x)=\sum_{i=0}^{q-1}c_ix^i$,  it is well known that its coefficients can be
calculated by $c_i=-\sum_{s\in \fq }s^{q-1-i}g(s),$ for $i=1,\ldots ,q-2$ and 
$c_0 + c_{q-1} = -\sum_{s\in \fq } g(s)^{q-1}$. Hence we can compute

$$a_{i, q-1} = - \sum_{s\in \fq } s^{q-1-i} (f(s))^{q-1} = -\sum_{s\in \fq} (f^{(-1)}(x))^{q-1-i} s^{q-1}  = b_{0, q-1-i},$$ 
for $1\leq i \leq q-2$ and $a_{0, q-1} + a_{q-1, q-1} = -\sum_{s\in \fq} f(s)^{q-1} = -\sum_{s\in \fq} s^{q-1} = 1$. Because $a_{q-1, q-1} =1$, we have
$a_{0, q-1} =0$, which is equal to $b_{0,q-1}$ by definition.  Hence we have proven that $a_{ik}=b_{q-1-k, q-1-i}$, for all $0\leq i \leq q-1$ and $1\leq k \leq q-1$. Since the multiplication by $P$ on both sides reverses the order of rows and columns of $A(f)$, it follows that $A(f^{(-1)})=PA(f)P$.
\end{proof}
\begin{Cor} Let $f$ be a permutation polynomial and $P$ be the antidiagonal permutation matrix as defined in Theorem~\ref{inverse}. Then the matrix $PA(f)$ is the inverse of itself.
\end{Cor}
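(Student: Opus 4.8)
The plan is to deduce this immediately from Theorem~\ref{inverse}. Writing $A := A(f)$ for brevity, the content of the corollary is precisely that $PA$ is an involution, i.e.\ that $(PA)^2 = I$, where $I$ denotes the $q \times q$ identity matrix.

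First I would record the one structural fact about $P$ that the argument needs, namely that $P$ is its own inverse: $P^2 = I$. This is clear from the description of $P$ as the antidiagonal permutation matrix, since multiplication by $P$ reverses the order of the rows (respectively columns) of a matrix, and reversing twice restores the original order; equivalently, $P$ is a symmetric permutation matrix and therefore an involution.

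Next, I would invoke Theorem~\ref{inverse}, which supplies the relation $A^{-1} = PAP$. With $P^2 = I$ in hand, the verification is a single forced computation:
\[
(PA)^2 = PAPA = (PAP)A = A^{-1}A = I,
\]
where the third equality is exactly the statement of Theorem~\ref{inverse}. Hence $PA$ coincides with its own inverse, which is the assertion of the corollary.

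There is essentially no obstacle here: the result is a purely formal consequence of the identity $A^{-1} = PAP$ together with $P^2 = I$, requiring no finite-field input beyond what Theorem~\ref{inverse} already encapsulates. The only point deserving a moment's care is confirming $P^2 = I$ under the indexing convention adopted for $P$; once that is settled, the displayed computation closes the argument.
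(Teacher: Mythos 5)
Your proof is correct and is essentially the paper's own argument: both invoke Theorem~\ref{inverse} to get $A^{-1}=PAP$ and then observe $(PA)^2=(PAP)A=A^{-1}A=I$. One small remark: the fact $P^2=I$ that you flag as the needed structural input is actually never used in your displayed computation, since the regrouping $PAPA=(PAP)A$ alone closes the argument.
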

\begin{proof}  By Theorem~\ref{inverse}, we have $(A(f))^{-1}=A(f^{(-1)})=PA(f)P$. Therefore $(P(A(f))^2=I$.
\end{proof}
\begin{Cor} Let $S$ be a group of invertible $q\times q$ matrixes over $\fq $ equipped with the operation $A *B=B\cdot A$ where $B\cdot A$ denotes the usual product of the matrices $B$ and $A$. Denote by $f_{\pi }$ the permutation polynomial of degree at most $q-2$ induced by a permutation $\pi \in S_q$. Then the mapping $\psi :S_q\rightarrow  S$ given by $\psi (\pi )=A(f_{\pi })$ is a monomorphism and thus $S_q$ is isomorphic to the  subgroup $\mathcal{A}=\{A(f_{\pi})|\pi \in S_q\}$ of the group $S$.
\end{Cor}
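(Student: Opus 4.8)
The plan is to realize $\psi$ as a composite of two maps that are already understood in this paper, and then check the homomorphism property against the reversed operation $*$. Throughout I would identify $S_q$ with the symmetric group on the $q$ elements of $\fq$ and fix the convention $(\pi\tau)(a)=\pi(\tau(a))$ for the product in $S_q$.

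First I would translate composition of permutations into composition of the associated polynomials. Since $f_\pi$ is by definition the unique polynomial of degree at most $q-2$ with $f_\pi(a)=\pi(a)$ for all $a\in\fq$, evaluating at an arbitrary $a$ gives $f_{\pi\tau}(a)=\pi(\tau(a))=f_\pi(f_\tau(a))$. Hence $f_{\pi\tau}$ and $f_\pi\circ f_\tau$ agree as functions on $\fq$, and therefore as reduced polynomials modulo $x^q-x$; that is, $f_{\pi\tau}=f_\pi\circ f_\tau$.

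Next I would invoke the Theorem $A(g\circ f)=A(f)A(g)$ with $g=f_\pi$ and $f=f_\tau$ to obtain
$$\psi(\pi\tau)=A(f_{\pi\tau})=A(f_\pi\circ f_\tau)=A(f_\tau)\,A(f_\pi).$$
On the other hand, by the definition of the operation $*$,
$$\psi(\pi)*\psi(\tau)=A(f_\pi)*A(f_\tau)=A(f_\tau)\cdot A(f_\pi),$$
so the two sides coincide and $\psi$ is a homomorphism from $(S_q,\circ)$ to $(S,*)$. I would stress here that the reversal built into $*$ is precisely what compensates for the order reversal in $A(g\circ f)=A(f)A(g)$: with the ordinary matrix product the homomorphism law would fail, so the contravariance of $A(\cdot)$ under composition is the one point that genuinely requires care.

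Finally, for injectivity I would note that $\pi\mapsto f_\pi$ is a bijection onto the set of reduced permutation polynomials and that the map $f\mapsto A(f)$ is one-to-one, so $\psi$ is injective as a composite of injections. Since each $A(f_\pi)$ is invertible, with inverse $A(f_\pi^{(-1)})$, the image lies in $S$, and the image of an injective homomorphism is a subgroup; thus $\mathcal{A}=\{A(f_\pi)\mid\pi\in S_q\}$ is a subgroup of $S$ isomorphic to $S_q$. I expect no real obstacle beyond the bookkeeping of the composition convention against the order reversal in $A$, which is exactly why $S$ is equipped with the opposite product.
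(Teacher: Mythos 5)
Your proposal is correct and follows essentially the same route as the paper's own proof: both verify the homomorphism law via the identity $A(g\circ f)=A(f)A(g)$, with the reversed product $*$ absorbing the contravariance, and both get injectivity from the injectivity of $f\mapsto A(f)$. Your write-up merely makes explicit two steps the paper treats as immediate, namely $f_{\pi\tau}=f_\pi\circ f_\tau$ and the subgroup property of the image.
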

\begin{proof}
It is easy to show that $(S, *)$ is a group and that the mapping $\psi $ is injective. Now $\psi (\pi \circ \alpha )=A(f_{\pi }\circ f_{\alpha })=A(f_{\alpha })\cdot A(f_{\pi })=\psi (\pi )*\psi (\alpha )$.
\end{proof}

Finally we comment on some potential applications of our results in sequence designs.  For any permutation polynomial $f$,  we define a nonlinear congruential pseudorandom sequence $\bar{a} = \{a_0, a_1, a_2, ... \}$ such that $a_n = f^{(n)}(a_0)$ and $a_0$ is the initial value. The period of $\bar{a}$ is equal  to  the smallest $k$ such that  $f^{(k+i)} (a_0) = f^{(i)}(a_0)$ for some $i$. Character sums of these sequences are studied in \cite{Domigo, Nied1, Nied2, Nied3, Nied5}.
For each initial value that is not fixed by $f$, we find the period of the  nonlinear congruential pseudorandom sequence. If we take $K$ as the least common multiple of all these periods, then we obtain $f^{(K)} = id$ and thus $A(f^{(K)}) = I$.   Conversely,  if $A(f)^K = I$ then the period of the nonlinear congruential sequence is a divisor of the order of the invertible matrix.  Next we demonstrate  the following two simple examples although they can be obtained easily without using these matrices. 

Let $f(x) = x^m$ be a polynomial over $\fq$ such that $(m, q-1) =1$. Then $A(f)$ is a permutation array such that the only nonzero entry in column $k$ is in $(km \pmod{q-1}, k)$ position where $1 \leq k \leq q-1$. The period of $\bar{a}$ is well known, which is  the order of $m$ modulo $q-1$.

Let $f(x) = ax+b \in \mathbb{F}_p[x]$, where $a$ is a primitive element in $\mathbb{F}_p$ and $b\neq 0$. Then
$$A(f)=\begin{bmatrix}  1& b   &b^2 &\ldots &b^{p-2}&1\\
0& a &2ab &\ldots &(p-2)ab^{p-3}& (p-1) a b^{p-2}\\
\vdots &\vdots &\ddots &\vdots &\vdots\\
0& 0 &0 &\ldots &a^{p-2}& (p-1) a_{p-2}b\\
0& 0 & 0 &\ldots & 0 & 1\end{bmatrix}.$$

The matrix is an upper triangular matrix such that its eigenvalues are all the nonzero elements ($a^k$, $k=1, \ldots, p-1$) in $\mathbb{F}_p$ and the multiplicity of $1$ is $2$. Hence the period of $A$ is equal to $p-1$,  the least common multiple of orders of these eigenvalues.

Computing the order of the matrix $A(f)$ associated with a permutation polynomial $f$ provides an algebraic way to find out the period of this kind of pseudorandom sequence, although the matrix $A(f)$ itself is costly to build.  For example, finding each column of $A(f)$ takes $q^{1+o(1)}$ 
bit operations  using the result of Kedlaya and Umans \cite{KU}. We wonder whether we could overcome this drawback by pre-computing the initial matrix, or taking a sparse matrix, or diagonalizing the matrix. 
As an attempt, we end our paper with a diagonalization result of $A(f)$ over some extension field of $\fq$. 

\begin{Them} \label{eigenvalue}
Let $f \in \fq[x]$ be a permutation polynomial of $\fq$ such that the disjoint cycles  $C_1,C_2,\ldots, C_k$ of $f$ have lengths $L_1, L_2, \ldots, L_k$ respectively. Let $K$ be an extension field of $\fq$ that contains all solutions of the equations $x^{L_i}-1=0$ for $i=1,2,\ldots,k$ and $\psi_i$ be a fixed primitive $L_i$-th root of unity in $K$ for each $i$. Then  $A(f)$ is diagonalizable with eigenvalues $\psi_{i}^{j}$ for $i=1, \ldots, k$ and $j=0, \ldots, L_i -1$.
\end{Them}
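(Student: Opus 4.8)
The plan is to view $A(f)$, over the extension field $K$, as the matrix of the composition (pull-back) operator on functions, and then to diagonalize that operator cycle by cycle. Proposition~\ref{compo} gives $A(f)v_g = v_{g\circ f}$, and its proof is purely formal in the coefficients of $g$, so the identity persists for every $g\in K[x]$ of degree at most $q-1$ (here $f\in\fq[x]\subseteq K[x]$, so its reduced powers keep their entries $a_{ik}\in\fq\subseteq K$). Since a polynomial of degree at most $q-1$ over $K$ vanishing on all of $\fq$ must be zero, the monomials $1,x,\ldots,x^{q-1}$ restrict to a $K$-basis of the space $\mathcal F$ of functions $\fq\to K$. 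Hence, over $K$, the matrix $A(f)$ is precisely the matrix in this basis of the operator $T\colon g\mapsto g\circ f$ on $\mathcal F$, and an eigenfunction $g$ with $g\circ f=\lambda g$ yields an eigenvector $v_g$ of $A(f)$ with the same eigenvalue $\lambda$. So it suffices to exhibit a basis of $\mathcal F$ made of eigenfunctions of $T$.

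Next I would exploit the cycle structure. Because $f$ restricts to a bijection of each cycle $C_i$, a function supported on $C_i$ composes with $f$ to a function again supported on $C_i$; thus $\mathcal F=\bigoplus_{i=1}^{k}\mathcal F_i$, where $\mathcal F_i$ consists of functions supported on $C_i$, is a $T$-invariant decomposition. On a single cycle $C_i=(c_0,c_1,\ldots,c_{L_i-1})$ with $f(c_m)=c_{m+1}$ (indices read modulo $L_i$), the operator $T$ acts on values by the cyclic shift $g(c_m)\mapsto g(c_{m+1})$. I would then write the candidate eigenfunctions explicitly: for each $j=0,\ldots,L_i-1$ let $g_{i,j}$ be supported on $C_i$ with $g_{i,j}(c_m)=\psi_i^{jm}$. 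A one-line check gives $(g_{i,j}\circ f)(c_m)=g_{i,j}(c_{m+1})=\psi_i^{j}g_{i,j}(c_m)$, so $g_{i,j}\circ f=\psi_i^{j}g_{i,j}$, the relation closing up around the cycle because $\psi_i^{jL_i}=1$. Hence $v_{g_{i,j}}$ is an eigenvector of $A(f)$ with eigenvalue $\psi_i^{j}$.

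Finally I would count. The functions $g_{i,0},\ldots,g_{i,L_i-1}$ are the columns of the Vandermonde (DFT) matrix $\bigl(\psi_i^{jm}\bigr)_{m,j}$ on $C_i$, which is invertible exactly because $\psi_i$ is a \emph{primitive} $L_i$-th root of unity, so the $L_i$ values $\psi_i^{0},\ldots,\psi_i^{L_i-1}$ are distinct; thus they form a basis of $\mathcal F_i$. Taking the union over $i$ and using the direct-sum decomposition produces a basis of $\mathcal F$ of size $\sum_i L_i=q$ consisting of eigenvectors. Therefore $A(f)$ has $q$ linearly independent eigenvectors over $K$ and is diagonalizable there, with eigenvalue multiset $\{\psi_i^{j}:1\le i\le k,\ 0\le j\le L_i-1\}$; in particular $1$ occurs with multiplicity $k$, once per cycle.

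I expect the delicate point — and the one the statement quietly builds in — to be exactly the invertibility of this cyclic-shift/Vandermonde step: a genuinely primitive $L_i$-th root of unity exists in characteristic $p$ only when $p\nmid L_i$, and that is precisely what makes the $L_i$ eigenvalues on $C_i$ distinct and the eigenfunctions independent. If some cycle length were divisible by $p$, the shift on that cycle would be unipotent rather than semisimple and $A(f)$ would fail to be diagonalizable, so I would either record $p\nmid L_i$ as a standing hypothesis or observe that it is forced by the assumed existence of the roots $\psi_i$. The remaining steps — extending Proposition~\ref{compo} to $K$ and verifying the $T$-invariant decomposition — are routine.
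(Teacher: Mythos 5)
Your proof is correct and takes essentially the same route as the paper's: the same cycle-supported eigenfunctions $g_{i,j}$ taking values $\psi_i^{jm}$ along the cycle $C_i$, the same one-line eigenvalue check $g_{i,j}\circ f=\psi_i^{j}g_{i,j}$, linear independence via primitivity (Vandermonde) within each cycle and disjoint supports across cycles, and the count of $q$ independent eigenvectors. Your closing observation that the hypothesis quietly forces $p \nmid L_i$ --- since no primitive $L_i$-th root of unity exists in characteristic $p$ otherwise, and the shift on such a cycle would be unipotent rather than semisimple --- is a genuine subtlety that the paper's own proof leaves unstated, and is worth recording.
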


\begin{proof}
 For each cycle $C_i$ we pick a starting point (arbitrarily) and denote it by $b_0$, so our cycle can be denoted by $(b_0,b_1,\ldots ,b_{L_i-1})$.
For each $j$ such that $0\leq j \leq L_i-1$,  we can  define
\begin{eqnarray*}
g_{i,j}(x)  &=&\left\{ 
\begin{array}{ll}  (\psi_i^j)^t &  ~if~ x = b_t \in C_i; \\
0 & ~if~ x\in \fq \setminus C_i. 
\end{array}\right.
\end{eqnarray*}

Obviously,  
$$g_{i,j}(f(x))=(\psi_i^j)g_{i,j}(x)$$
i.e., each $g_{i, j}$ produces an eigenvector of $A(f)$ with the corresponding eigenvalue $(\psi_i^j)$.
Indeed, if $x\not \in C_i$,  then $f(x)\not \in C_i$ and so $g_{i,j}(x)=0=g_{i,j}(f(x))$. If $x\in C_i$ then $x=b_t$ for some $t=0,1,\ldots ,L_i-1$. Then $f(x)=b_{t+1\pmod{ L_i}}$. Thus $g_{i,j}(f(x))= (\psi_i^j)^{t+1}=(\psi_i^j) (\psi_i^j)^t=(\psi_i^j)g_{i,j}(x)$. In this way we obtain a set $\{g_{i,j}(x):i =1, \ldots, k, j=0,1,\ldots,L_i-1\}$ of $q$ polynomials in $K[x]$. For each fixed $i$, it is easy to see that  $\{g_{i, j}(x): j =0, \ldots, L_i-1\}$ is linearly independent because $\psi_i$ is a primitive  $L_i$-th root of unity. Moreover, if $i\neq i^\prime$ then $g_{i, j}(x) g_{i^\prime, j^\prime} (x) = 0$. Therefore the set of $q$ polynomials $\{g_{i,j}(x): i =1, \ldots, k, j=0,1,\ldots,L_i-1\}$ is linearly independent. Because the size of $A(f)$ is $q$ and these $g_{i, j}(x)$'s provide us $q$ linearly independent eigenvectors corresponding to eigenvalue $\psi_i^j$, the proof is complete.
\end{proof}


\begin{Remark} From the proof of Theorem~\ref{eigenvalue}, all polynomials $g(x) \in K[x]$ such that $g(f(x)) = \lambda g(x)$ for some $\lambda$ satisfy
$$g(x)=\sum_{i=1}^k \sum_{j=0}^{L_i-1}a_{i,j}g_{i,j}(x). $$
\end{Remark}

\begin{Remark}
 Theorem~\ref{eigenvalue} can be extended to non permutation polynomials such that either $x$ or $f(x)$ is in a cycle of the functional graph of $f$,  that is, the tail length of any element in the functional graph is at most one.
 For each such a leaf $d$ in the functional graph of $f$, we define the function
\begin{eqnarray*}
g_{i,d}(x)  &=&\left\{
\begin{array}{ll}  1 &  ~if~ x = d; \\
0 & ~if~ x \neq d.
\end{array}\right.
\end{eqnarray*}
Obviously,  $d\not\in V_f$.  Hence $g_{i,d}(f(x))=0=0g_{i,d}(x)$ for all $x\in \fq$ and thus $g_{i,d}(x)$ derives an eigenvector corresponding to  the eigenvalue $0$. Together with the eigenvectors corresponding to the nodes in the cycles, we have $q$ linearly independent eigenvectors and thus $A(f)$ is diagonalizable. However, in general $A(f)$ is not necessarily diagonalizable in each of its extension fields.  For example, let $f(x) = x^2 + x + 1 \in \mathbb{F}_{5}[x]$. Then
\[
A(f) = \left(
\begin{array}{ccccc}
1 & 1 & 1 & 1 & 1\\
0 & 1 & 2 & 1 & 0\\
0 & 1 & 3 & 2 & 0\\
0 & 0 & 2 & 2 & 0\\
0 & 0 & 1 & 1 & 0
\end{array}
\right)
\]
It is easy to check that the rank of $A(f)$ is $3$ over $\mathbb{F}_5$.  However, eigenvalues of $A(f)$ over 
$\mathbb{R}$ are $5, 1, 1, 0, 0$ and thus are $0, 1, 1, 0, 0$  over $\mathbb{F}_5$. Hence $A(f)$ can not be diagonalizable over any extension field of $\mathbb{F}_5$. 
\end{Remark}

\end{document}